\newtheorem{thm}{Theorem}[section]
\newtheorem{cor}[thm]{Corollary}
\newtheorem{lem}[thm]{Lemma}
\theoremstyle{remark}
\theoremstyle{definition}
\numberwithin{equation}{section}
\numberwithin{thm}{section}
\theoremstyle{plain}
\newtheorem{clm}{Claim}
\begin{document}

\subjclass[2010]{Primary 57M50. Secondary 57N10}

\title[Decomposing Heegaard splittings]{Decomposing Heegaard splittings along separating incompressible surfaces in  3-manifolds}

\author{Kazuhiro Ichihara}
\address{Department of Mathematics, College of Humanities and Sciences, Nihon University, 3-25-40 Sakurajosui, Setagaya-ku, Tokyo 156-8550, Japan.}
\email{ichihara@math.chs.nihon-u.ac.jp}

\author{Makoto Ozawa}
\address{Department of Natural Sciences, Faculty of Arts and Sciences, Komazawa University, 1-23-1 Komazawa, Setagaya-ku, Tokyo, 154-8525, Japan.}
\email{w3c@komazawa-u.ac.jp}

\author{J. Hyam Rubinstein}
\address{Department of Mathematics and Statistics, The University of Melbourne, VIC 3010, Australia.}
\email{joachim@unimelb.edu.au}

\thanks{The first author and the second author are partially supported by Grant-in-Aids for Scientific Research (C) (No. 26400100, 17K05262), The Ministry of Education, Culture, Sports, Science and Technology, Japan, respectively. The third author is partially supported under the Australian Research Council's Discovery funding scheme (project number DP130103694).}

\dedicatory{Dedicated to Professor Tsuyoshi Kobayashi on the occasion of his 60th birthday}

\begin{abstract}
In this paper, by putting a separating incompressible surface in a 3-manifold 
into Morse position relative to the height function associated to a strongly irreducible Heegaard splitting, 
we show that an incompressible subsurface of the Heegaard splitting can be found, by decomposing the 3-manifold along the separating surface. 
Further if the Heegaard surface is of Hempel distance at least 4, then there is a pair of such subsurfaces on both sides of the given separating surface. This gives a particularly simple hierarchy for the 3-manifold.
\end{abstract}

\maketitle

\section{Introduction}

In this paper, we consider separating incompressible surfaces embedded in a closed orientable irreducible 3-manifold with Heegaard splittings. 
Here a \textit{Heegaard splitting} of a closed orientable 3-manifold $M$ is 
a splitting along a closed orientable surface $S$ embedded in $M$, 
called a \textit{Heegaard surface}, into two handlebodies. 

Note that a Heegaard splitting of a 3-manifold $M$ with a Heegaard surface $S$ induces a height function $h: M \to [0,1]$ on $M$. In particular there is a singular foliation $\{h^{-1}(t)= S_t \}_{0<t<1}$ of $M$ with each $S_t$ homeomorphic to $S$ and $h^{-1}(0), h^{-1}(1)$ graphs which are spines of the two handlebodies bounded by $S= h^{-1}(1/2)$.
Relative to $h$, given an incompressible surface $J$ in $M$, it is well-known that one can put $J$ into Morse position, namely there are finitely many singularities of $J$ relative to the foliation $\{S_t\}$ and these are all of simple saddle type. 
By using this, if the Heegaard splitting is strongly irreducible, we show that there always exist incompressible level subsurfaces as follows. 

\begin{thm}\label{MainThm}
Let $M$ be a closed irreducible orientable 3-manifold admitting a strongly irreducible Heegaard splitting. 
Let $\{ S_t \}_{0<t<1}$ be a singular foliation of $M$ associated to the height function $h$ for the splitting. Furthermore let $J$ be a separating closed orientable incompressible surface in $M$ which cuts $M$ open into $M_+$ and $M_-$. Assume that $J$ is in Morse position relative to $h$,
Then there exists a non-critical value of $t$ to that the level surface $S_t$ satisfies one of $S_t \cap M_+$ or $S_t \cap M_-$ is incompressible in $M_+$ or $M_-$ respectively. 
Furthermore, if the Heegaard splitting is of Hempel distance at least $4$, then there is a non-critical value of $t$ so that both 
$S_t \cap M_+$ and $S_t \cap M_-$ are incompressible in each of $M_+$ and $M_-$. 
\end{thm}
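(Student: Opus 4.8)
The plan is to prove both statements by contradiction, sweeping through the level surfaces $S_t$ and using strong irreducibility (resp.\ the distance hypothesis) to rule out compressing disks of $S_t$ occurring simultaneously on its two sides in certain configurations.

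\emph{Preliminaries.} After a standard innermost-disk isotopy of $J$ keeping it in Morse position, I may assume that for every noncritical $t$ each curve of $J\cap S_t$ is essential on $S_t$, and that the boundary of any compressing disk of $S_t\cap M_+$ in $M_+$ (or of $S_t\cap M_-$ in $M_-$) is essential on the closed surface $S_t$; using incompressibility of $J$ and irreducibility of $M$, such a disk can also be isotoped off $J$, hence made to lie in $\operatorname{int}M_+$ (resp.\ $\operatorname{int}M_-$). It is then a compressing disk for the Heegaard surface $S_t$ contained either in the handlebody $h^{-1}([0,t])$ below $S_t$ or in $h^{-1}([t,1])$ above it; call these \emph{lower} and \emph{upper} disks. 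Two facts will be used: a lower (upper) disk in $\operatorname{int}M_+$ is automatically disjoint from a lower (upper) disk in $\operatorname{int}M_-$, so only \emph{opposite-side} pairs are constrained by strong irreducibility; and since $J$ is incompressible of positive genus it does not embed in a handlebody, so $J\cap S_t\neq\emptyset$ for every noncritical $t$.

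\emph{First statement.} Suppose for contradiction that for every noncritical $t$ both $S_t\cap M_+$ and $S_t\cap M_-$ are compressible. Fixing such a $t$, we get a compressing disk of $S_t$ in $\operatorname{int}M_+$ and a disjoint one in $\operatorname{int}M_-$; since $S_t$ is isotopic to the given strongly irreducible Heegaard surface it has no pair of disjoint compressing disks on opposite sides, and a short case check rules out $S_t\cap M_+$ carrying both a lower and an upper disk and rules out the two subsurfaces compressing to opposite sides, leaving a well-defined common side $\delta(t)\in\{\mathrm{lower},\mathrm{upper}\}$. As $S_t\cap M_\pm$ is unchanged up to isotopy between consecutive critical values, $\delta$ is constant there. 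For $t$ near $0$, $S_t$ follows a spine $\Sigma_0$ of the lower handlebody: if $S_t\cap M_+$ had no lower disk then, by the normalization, $\Sigma_0\cap M_+$ would be a forest disjoint from $J$ and $S_t\cap M_+$ a union of $2$-spheres, hence incompressible, a contradiction; so $S_t\cap M_+$ has a lower disk, forcing $\delta(t)=\mathrm{lower}$ for small $t$, and symmetrically $\delta(t)=\mathrm{upper}$ for $t$ near $1$. Thus $\delta$ changes across some critical level $c$, where $J\cap S_t$ undergoes a single saddle move. Take a lower disk $D$ of $S_{c-\varepsilon}\cap M_+$, isotope $\partial D$ within $S_{c-\varepsilon}\cap M_+$ off a neighborhood of the saddle point, and push $\partial D$ together with a collar of $D$ upward to $S_{c+\varepsilon}$ along the product structure: the resulting disk still lies below $S_{c+\varepsilon}$ and in $\operatorname{int}M_+$, and its boundary is essential on $S_{c+\varepsilon}$, being literally the same curve on $S$; so it is a lower disk of $S_{c+\varepsilon}\cap M_+$, contradicting $\delta(c+\varepsilon)=\mathrm{upper}$. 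The delicate point of this half is the preliminary normalization together with the spine analysis pinning down the endpoint values of $\delta$.

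\emph{Second statement.} By the first statement every noncritical level has at least one of $S_t\cap M_+$, $S_t\cap M_-$ incompressible; assume no level has both. Then between consecutive critical values exactly one is incompressible, partitioning the intervals into a $+$-type (where $S_t\cap M_+$ is incompressible) and a $-$-type. A key observation is that if some $S_t\cap M_-$ carried both a lower and an upper disk, their boundaries would lie in the disk sets of the two handlebodies and each be disjoint from the nonempty set $J\cap S_t$, giving $d(S)\le 2$; so under $d(S)\ge 4$ this never happens, and likewise for $S_t\cap M_+$. Using this together with the flow-up device from the first half — a lower disk flows upward and an upper disk flows downward across a saddle, keeping its boundary essential on $S$ and in the same subsurface — one shows first that both types occur (otherwise a compressing disk of the always-compressible subsurface produced near $t=0$ would flow up to one present near $t=1$ as well, and with a meridian of the opposite handlebody produced there would force $d(S)\le 2$), and then, examining an adjacent $+$-interval and $-$-interval, that the compressing disks forced on either side of the transition flow to a common level where the $M_+$- or $M_-$-subsurface acquires disks on both sides of $S_t$, again yielding $d(S)\le 2$ (or $\le 3$), a contradiction. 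I expect the hardest part to be exactly this transition bookkeeping: in the various configurations of lower/upper disks on the two sides of the saddle, controlling how far the relevant compressing curves can move in the curve complex and checking that the bound obtained always contradicts $d(S)\ge 4$ — this case analysis, together with the degenerate cases in the normalization and spine arguments, is where the real work lies and why the threshold is $4$.
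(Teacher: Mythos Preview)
Your first-statement argument is a Rubinstein--Scharlemann style labelling sweepout, in the same spirit as the paper's but organized as a direct contradiction rather than via a structural trichotomy. Two points need repair. First, the spine claim is misstated: if $S_t\cap M_+$ has no lower disk for $t$ near $0$, what actually follows is that every component of $\Sigma_0\cap M_+$ is a tree with at most two ends on $J$, so $S_t\cap M_+$ is a union of annuli and disks (not $2$-spheres); these are still incompressible (their cores are parallel into $J$, hence essential in $M$), so the contradiction survives. Second, and more seriously, you cannot in general isotope $\partial D\subset S_{c-\varepsilon}\cap M_+$ off the saddle region: if the band lies in $M_+$ (so $M_+$ is \emph{thinned} across $c$) then the band arc may be essential in $S_{c-\varepsilon}\cap M_+$ and meet $\partial D$ unavoidably. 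The fix is to flow the lower disk on whichever side is \emph{thickened} at that saddle; since exactly one of $M_\pm$ is thickened, this always succeeds and yields the same contradiction with $\delta(c+\varepsilon)=\mathrm{upper}$.

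For the second statement you are candid that the transition bookkeeping is the crux, and indeed your sketch does not supply it. The paper sidesteps this by first proving a finer trichotomy: either (i) at some level one of $S_t\cap M_\pm$ compresses on both sides of $S_t$ (forcing the other incompressible and $d\le2$), or (ii) across a single critical level $\hat t$ the incompressible side switches, with a lower disk $D_0$ below and an upper disk $D_1$ above, or (iii) both $S_t\cap M_\pm$ are incompressible just below $\hat t$. With this in hand the distance-$4$ claim is a two-line argument: in case (ii) the curve systems $J\cap S_t$ and $J\cap S_{t'}$ differ by one band move and can be made disjoint, so $\partial D_0,\ J\cap S_t,\ J\cap S_{t'},\ \partial D_1$ is a length-$3$ path in the curve complex, giving $d\le3$. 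Your labelling approach could likely be pushed through, but establishing the trichotomy first replaces the case analysis you anticipate with this single clean estimate, and it is exactly this step that explains why the threshold is $4$ rather than $3$.
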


The first assertion of the theorem gives an alternative proof of \cite[Proposition 2.6]{KobayashiQiu}, and the second assertion also gives 
that of a recent result in \cite{Saito}. 

Here a Heegaard splitting of $M$ is called \textit{strongly irreducible} if for its Heegaard surface $S$, every compressing disk on one side of $S$ meets every compressing disk on the other side of $S$. 
Also the \textit{Hempel distance} of a Heegaard splitting for $M$ is defined as follows: 
For its Heegaard surface $S$, consider the collections of curves $\mathcal C$ and $\mathcal C^{\prime}$ which bound compressing disks for $S$ in the two regions, which are handlebodies on either side of $S$. 
A path between these collections is a sequence of essential simple closed curves $C=C_0,C_1, \dots C_k$ so that each pair $C_i, C_{i+1}$ are disjoint and $C_0  \in \mathcal C, C_k \in \mathcal C^{\prime}$. 
The Hempel distance is then the smallest value of $k$ amongst all such sequences. 
See \cite{Hempel} for the original definition.

\section{Proofs}

We first prepare the following lemma essentially given in \cite[Lemma 3.2]{Ozawa}. See also \cite{Hartshorn}. 

\begin{lem}\label{lem}
Let $M$ be a closed irreducible orientable 3-manifold admitting a strongly irreducible Heegaard splitting $M = V \cup_S W$. 
Form a height function $h : M \to [0,1]$ associated to the splitting with $S= h^{-1} (1/2)$,  $S_t$ homeomorphic to $S$ for $0 < t < 1$, and $h^{-1} (0)$, $h^{-1} (1)$ are the spine of the handlebodies $V$, $W$ respectively 
Then a closed incompressible surface $J$ embedded in $M$ can be isotoped so that the following conditions are satisfied. 
\begin{enumerate}
\item $J$ intersects both spines of $V$, $W$ transversely.
\item $J$ has only simple saddle points with respect to the height function $h$ for $0< t < 1$ at mutually distinct levels. 
\item  $J$ intersects each level Heegaard surface in essential curves. 
\end{enumerate}
\end{lem}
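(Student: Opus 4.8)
The plan is to reduce the lemma to a standard position argument, dealing with the three conditions more or less in order and, crucially, using strong irreducibility to get (3).

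First I would handle conditions (1) and (2) by a routine general position and Morse theory argument. Since $J$ is a closed surface and the two spines are graphs (1-complexes), a generic isotopy of $J$ makes $J$ miss the vertices of the spines and meet the edges transversely in finitely many points; this gives (1). Restricting the height function $h$ to $J$, a further generic perturbation makes $h|_J$ a Morse function on the compact surface $J$ with all critical points nondegenerate, hence centers (maxima/minima) and saddles; the centers can be pushed off to the levels $t=0$ or $t=1$ near the spines, or absorbed into the spine structure, so that for $0<t<1$ only saddles remain, and a final small perturbation separates the critical levels. This gives (2). None of this uses incompressibility of $J$ or strong irreducibility.

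The substantive step is (3): arranging that every level surface $S_t$ (for noncritical $t$) meets $J$ only in curves essential in $S_t$. This is where incompressibility of $J$ and strong irreducibility of the splitting enter, exactly as in \cite[Lemma 3.2]{Ozawa} and \cite{Hartshorn}. Suppose some noncritical level $S_t$ meets $J$ in a curve $c$ that bounds a disk in $S_t$; choose $c$ innermost on $S_t$, bounding a disk $D\subset S_t$ with interior disjoint from $J$. Since $J$ is incompressible, $c$ also bounds a disk $D'$ in $J$, and since $M$ is irreducible, the sphere $D\cup D'$ bounds a ball, so we may isotope $J$ across this ball to remove $c$ (and possibly other intersection curves) from $S_t$, without creating new intersections with the spines or new saddles — one has to be slightly careful that this isotopy can be done level-preserving enough not to disturb (1) and (2), which is the standard ``innermost disk'' cleanup. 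Iterating removes all inessential curves. The one case that this does not directly handle is when $c$ is inessential in $S_t$ but the innermost disk $D$ lies in $S_t$ on the side where the compressing disk would have to be pushed through a critical level; here one must instead use the saddle structure: near a saddle level the two nearby regular levels differ by a band move, and one checks that an inessential curve persisting through all levels would yield, together with the incompressibility of $J$, a pair of disjoint compressing disks for $S$ on opposite sides — contradicting strong irreducibility.

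I expect the main obstacle to be precisely this last point: controlling how inessential intersection curves can appear, disappear, or change essentiality as $t$ passes a saddle critical level, and extracting from a persistent bad curve a violation of strong irreducibility rather than merely of irreducibility of $M$. The bookkeeping — that the disk-swapping isotopies used to remove inessential curves can be chosen to preserve conditions (1) and (2), and terminate — is routine but needs care; and the appeal to strong irreducibility requires identifying, from a curve essential in $S_t$ but compressible in $M$ on a given side, an honest compressing disk for the Heegaard surface itself, which is the heart of the Casson–Gordon / Scharlemann thin position style argument underlying \cite{Ozawa}.
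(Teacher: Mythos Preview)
Your handling of conditions (2) and (3) has a genuine gap, and it stems from separating them. You treat (2) as routine --- ``the centers can be pushed off to the levels $t=0$ or $t=1$ \ldots\ none of this uses incompressibility'' --- and then attack (3) by innermost-disk swaps, worrying (correctly) that these swaps may disturb (2). The problem is that pushing a center to a spine increases the spine intersection number, while a disk swap in (3) recreates a center; there is no complexity in your scheme that visibly decreases, so the process need not terminate. The paper proceeds quite differently: it first minimizes, over the isotopy class of $J$, the single complexity $|J \cap (h^{-1}(0) \cup h^{-1}(1))|$ plus the number of critical points of $h|_J$ in $0<t<1$. In this minimal position one then \emph{proves} that no interior maximum or minimum exists, by taking the lowest maximum $a$, the ``maximal horizontally $\partial$-parallel subsurface'' $J_a$ containing it (as in \cite{Ozawa}), and analyzing the band at the saddle point lying on $\partial J_a$; the incompressibility of $J$ and the irreducibility of $M$ rule out every possible configuration of that band, contradicting minimality. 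So incompressibility is precisely what makes (2) work, contrary to your claim.

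Once (2) is established this way, (3) is nearly free and does not use strong irreducibility at all: if some $S_t$ met $J$ in a curve bounding a disk in $S_t$, then the incompressibility of $J$ and the irreducibility of $M$ force a center of $h|_J$ in $0<t<1$, contradicting (2). Your appeal to strong irreducibility --- extracting disjoint compressing disks on opposite sides from a persistent inessential curve --- is a red herring here; that hypothesis plays no role in the proof of this lemma and is used only later in the paper.
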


\begin{proof}
First we assume that $J$ and both spines $h^{-1} (0)$, $h^{-1} (1)$ are in general position.
Then (1) is satisfied.

Next we assume that $h|_J$ is a Morse function for $0<t<1$, that is, it has only finitely many critical points, all non-degenerate, and with all critical values distinct.

We assume that the sum of $|J\cap (h^{-1} (0)\cup h^{-1} (1))|$ and the number of critical points for $0<t<1$ is minimal up to isotopy of $J$.
Now suppose without loss of generality that there exists a maximal point of $J$ for $0<t<1$.
By the same argument in \cite[Lemma 3.2]{Ozawa}, we have a contradiction on the minimality.

The key steps of the proof are follows. First we look at the lowest maximal point $a$ of $J$, and consider the ``maximal horizontally $\partial$-parallel subsurface'' $J_a$ of $J$ containing $a$, which was defined in \cite{Ozawa} (See Figure 12 in \cite{Ozawa}). Next we consider the band at the saddle point $p$ which is contained in $\partial J_a$. By the maximality of $J_a$, the minimality, the incompressibility of $J$ and the irreducibility of $M$, we have a contradiction for any cases of the band.
Hence (2) is satisfied.

Finally suppose that there exists a loop $l$ of $J\cap h^{-1}(t)$ for $0<t<1$ which bounds a disk in $h^{-1}(t)$.
By the incompressibility of $J$ and the irreducibility of $M$, it follows that there exists a maximal or minimal point of $J$ for $0<t<1$.
This contradicts to (2).
Hence (3) is satisfied.
\end{proof}

We remark that as $t$ increases, when $S_t$ passes a saddle point of $J$, a band of $S_t$ is pushed across $J$ from one side of $J$ into the other, 
in other words, a band sum occurs for some curves in $S_t \cap J$. 
See Appendix to \cite{Haken} for a very elegant discussion of this procedure. 
We will use the terminology that $J$ is in Morse position relative to the height function $h$ to mean that the conditions of Lemma~\ref{lem} are satisfied. 

Now the next theorem gives a proof of the first assertion of Theorem~\ref{MainThm}. 

\begin{thm}
Consider a separating closed orientable incompressible surface $J$ in a closed irreducible orientable 3-manifold $M$ admitting a strongly irreducible Heegaard splitting. 
Denote the two sides of $J$ as $M_+, M_-$, and consider a singular foliation $\{ S_t \}_{0<t<1}$ of $M$ associated to a height function for the Heegaard splitting so that $J$ is in Morse position relative to $S_t$. 
Then either;
\begin{itemize}

\item  there is some non critical level $S_t$ so that $S_t \cap M_+$  is incompressible and $S_{t} \cap M_-$ has compressing disks on both sides of $S_{t}$, or the same with $M_+, M_-$ interchanged.

\item there is a critical level $\hat t$ so that $S_{t} \cap M_+$ is incompressible for $t<{\hat t}$ and $t$ close to ${\hat t}$, and $S_{t'} \cap M_-$  is incompressible for $t' > {\hat t}$ and $t'$ close to ${\hat t}$, or the same with $M_+$, $M_-$ interchanged. Moreover $S_t \cap M_-$ has a compressible disk below $S_t$ for $t < {\hat t}$ and $t$ close to ${\hat t}$ and $S_t \cap M_+$ has a compressible disk above $S_t$ for $t > {\hat t}$ and $t$ close to ${\hat t}$.

\item there is a critical level ${\hat t}$ so that both $S_{t} \cap M_+$ and $S_{t} \cap M_-$ are incompressible for $t<{\hat t}$ and $t$ arbitrarily close to ${\hat t}$.
\end{itemize}

\end{thm}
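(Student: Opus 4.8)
The plan is to run a standard sweepout/thin-position argument on the singular foliation $\{S_t\}$, tracking at each non-critical level $t$ which of the two subsurfaces $S_t\cap M_+$, $S_t\cap M_-$ is incompressible in the respective piece, and using strong irreducibility to exclude the "bad" configuration in which both are compressible from a common side. First I would observe that for $t$ close to $0$ the level surface $S_t$ lies inside a small neighborhood of the spine of $V$, so $S_t\cap M_\pm$ is a collection of disks (hence incompressible, or meets $J$ only in inessential curves which by Lemma~\ref{lem}(3) do not occur, so it is actually incompressible by a boundary-compression count); symmetrically for $t$ close to $1$. So both extreme ends are "good." Define, for each non-critical $t$, a label from $\{+,-,\text{both},\text{neither}\}$ recording which side-subsurface is incompressible; by the endpoint analysis the label near $0$ and near $1$ is $+$ or $-$ (in fact both). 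The labels are locally constant on complementary intervals of the critical values, so some critical value $\hat t$ is a transition.

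Next I would analyze the three types of transitions at a critical level $\hat t$, which correspond exactly to the three bulleted alternatives. Passing a saddle of $J$ at level $\hat t$ performs a band sum on the curves of $S_t\cap J$: as $t$ crosses $\hat t$ the subsurface $S_t\cap M_+$ either gains or loses a band, and likewise $S_t\cap M_-$. The key point is the interaction with compressing disks. Suppose on the interval just below $\hat t$ the surface $S_t\cap M_+$ is incompressible in $M_+$ but $S_t\cap M_-$ is compressible in $M_-$; a compressing disk $D$ for $S_t\cap M_-$ in $M_-$ is a compressing disk for $S_t$ in $M$ lying on one definite side of $S_t$ (the side of $S_t$ that faces into $M_-$, i.e. the $W$-side or $V$-side depending on orientation). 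If this "bad" configuration persists across $\hat t$, or reappears on the far interval with roles of $M_+$ and $M_-$ swapped but compressing disks on the \emph{same} side of $S_t$, then combining with the compressing disk coming from the $M_+$-piece at the other end (supplied by the endpoint analysis and a minimality-of-intersection argument along the lines of Lemma~\ref{lem} and \cite{Ozawa}), I would produce a compressing disk for $S$ on one side and a disjoint one on the other side, contradicting strong irreducibility. This forces the transition to be one of the three listed types: either a non-critical level with one side incompressible and the other side compressible from \emph{both} sides (first bullet), or a critical level where incompressibility of the $M_+$-piece on one side hands off to incompressibility of the $M_-$-piece on the other side, with the compressing disks on opposite sides of $S_t$ so that no contradiction with strong irreducibility is yet forced (second bullet), or a critical level where both pieces are incompressible on approach from below (third bullet).

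The bookkeeping I expect to be routine once set up: at a saddle, $S_t\cap M_+$ changes by an elementary band move, and incompressibility of a subsurface-with-boundary is detected by the absence of essential compressing disks, which under a band sum can only be created or destroyed one at a time; a disk that survives the band move on one side yields, together with the surviving disk on the other side near the opposite end of $M$, the forbidden pair of disjoint compressing disks. The main obstacle will be the third bullet: arranging that when neither the first nor the second alternative holds, one can nonetheless locate a single critical level $\hat t$ approached from below by levels on which \emph{both} $S_t\cap M_+$ and $S_t\cap M_-$ are incompressible — rather than merely having incompressibility of the two pieces realized on disjoint intervals that never share an endpoint. Handling this requires a careful analysis of how the "$+$-incompressible" intervals and the "$-$-incompressible" intervals overlap: if they are disjoint, the gap between them is covered by levels where some piece is compressible, and one tracks which side of $S_t$ the compressing disks lie on across the whole gap; strong irreducibility (every $V$-side compressing disk meets every $W$-side one) forces the side to be constant across the gap, which then, played off against the $+$-incompressible interval on one side and the $-$-incompressible interval on the other, forces one of the first two bullets after all. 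So the trichotomy is obtained by assuming the first two fail and deriving the third, with strong irreducibility doing the essential work at every step.
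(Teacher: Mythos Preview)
Your endpoint analysis is wrong, and this derails the whole labeling scheme. For $t$ near $0$ the level $S_t$ is still a genus-$g$ surface; cutting it along the essential curves $S_t\cap J$ does \emph{not} produce disks. What is true for small $t$ is that $H^0_t\cap M_\pm$ are handlebodies (pieces of a thin regular neighborhood of the spine cut along the meridional disks $J\cap H^0_t$), and hence $S_t\cap M_+$ and $S_t\cap M_-$ both admit compressing disks lying in $H^0_t$. Symmetrically, for $t$ near $1$ both pieces compress in $H^1_t$. So the picture at the ends is the opposite of what you wrote: both pieces are \emph{compressible}, with all compressing disks on a known side of $S_t$.

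Consequently the four-valued label $\{+,-,\text{both},\text{neither}\}$ is not the right invariant to sweep. The paper first disposes of the situation in which, at some regular level, one of $S_t\cap M_\pm$ compresses on \emph{both} sides of $S_t$: strong irreducibility then forces the other piece to be incompressible, giving the first bullet outright. Assuming this never happens, every compressing disk for either piece lies on a single side of $S_t$, and the quantity actually tracked is that side: below for $t$ small, above for $t$ near $1$. One then locates the first critical level $\hat t$ at which an $H^1$-side compressing disk is created, notes which of $M_\pm$ loses the band at the saddle, and uses the elementary monotonicity of compressing disks under a single band move (your ``one at a time'' remark, which is correct and is the paper's Claim~1) to split into two subcases according to whether $S_t\cap M_+$ still compresses in $H^0_t$ just below $\hat t$. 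These two subcases are exactly the second and third bullets. Your final paragraph gropes toward this side-tracking idea, but as a repair for a special case rather than as the organizing principle; the interval-overlap argument you sketch there is not needed once the setup is corrected.
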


\begin{proof}
By Lemma~\ref{lem}, we assume that $J$ satisfies the conditions described in the lemma. 

First suppose that at some level $t$, one of $S_t \cap M_+$ or $S_t \cap M_-$ has compressing disks on both sides. 
Since the Heegaard splitting is strongly irreducible, either $S_t \cap M_-$ or $S_t \cap M_+$, respectively must be incompressible. 
So the first case of the theorem holds. 

On the other hand, we assume that neither $S_t \cap M_+$ nor $S_t \cap M_-$ has compressing disks on both sides, for any value of $t$. 
We know that for $t$ small, there are compressing disks for $S_t \cap M_+$ and $S_t \cap M_-$ in $H^0_t$, whereas for $t$ close to $1$, there are compressing disks for $S_t \cap M_+$ and $S_t \cap M_-$ in $H^1_t$. 
Here we denote the two handlebodies obtained by splitting $M$ open along $S_t$ by $H^0_t$ (below $S_t$) and $H^1_t$ (above $S_t$). 
Thus, there exists some level $u$ ($0<u<1$) such that for $t<u$, any compressing disk for $S_t \cap M_+$ and $S_t \cap M_-$ lies in $H^0_t$, whereas for $t>u$, any compressing disk for $S_t \cap M_+$ and $S_t \cap M_-$ lies in $H^1_t$. 
Then, since $J$ has only saddle critical points with respect to the height function, there exists a critical level ${\hat t} \ge u$ at which a band sum occurs which produces the first compressing disk for $S_{t'} \cap M_+$ or $S_{t'} \cap M_-$ in $H^1_{t'}$ for a regular value $t' > \hat t$. 

Without loss of generality, we may assume that at the level $\hat t$, the side on which the band leaves is the $M_+$ side, and the side where the band is received is the $M_-$ side. 

\begin{clm}\label{clm1}
For $t$ very close to $\hat t$ with $t < \hat t$ and $t'$ very close to $\hat t$ with $t' > \hat t$, 
if a compressing disk exists for $S_{t'} \cap M_+$ in $H^1_{t'}$ (resp. for $S_t \cap M_-$ in $H^0_t$), 
then there is a compressing disk in $S_t \cap M_+$ in $H^1_t$ (resp. for $S_{t'} \cap M_-$ in $H^1_{t'}$).
\end{clm}
\begin{proof}
By performing a band sum, $S_t \cap M_+$ is thinned to produce $S_{t'} \cap M_+$, whereas $S_t \cap M_-$ is thickened to form $S_{t'} \cap M_+$. 
See Figure~\ref{fig1}. 
\end{proof}

\begin{figure}[htbp]
	\begin{center}
	\begin{tabular}{cc}
	\includegraphics[trim=0mm 0mm 0mm 0mm, width=.53\linewidth]{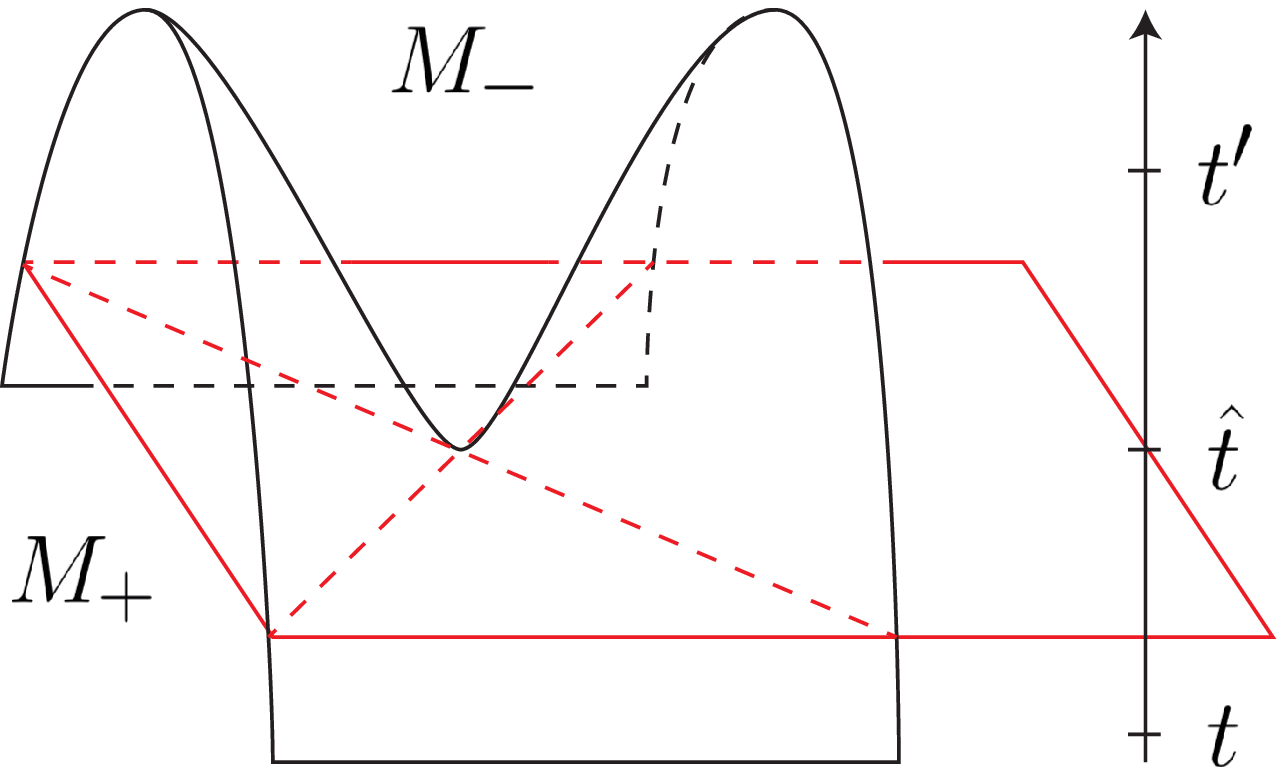}&
	\includegraphics[trim=0mm 0mm 0mm 0mm, width=.37\linewidth]{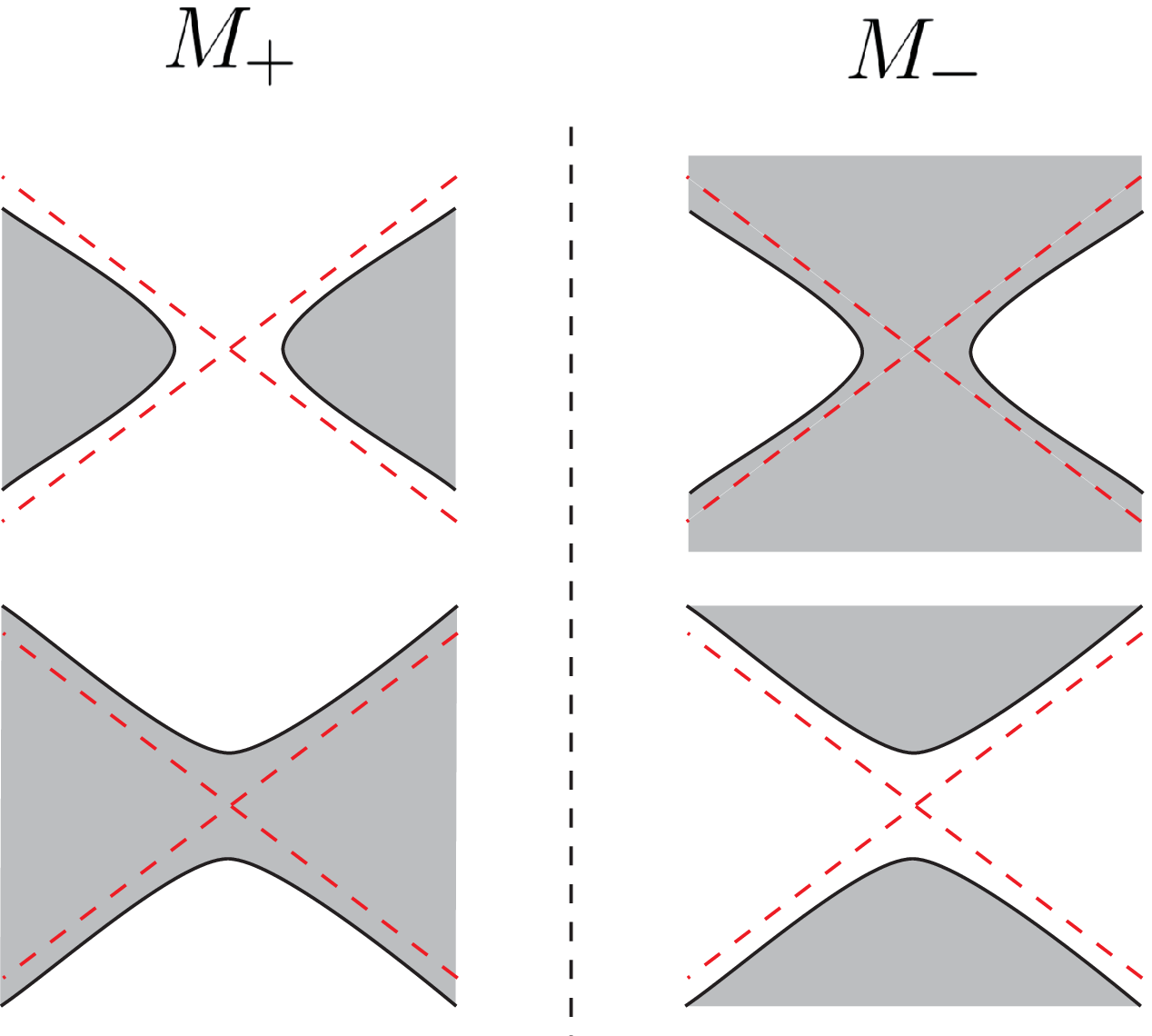}\\
	\end{tabular}
	\end{center}
	\caption{}\label{fig1}
	\label{saddle}
\end{figure}

Now we have the two possibilities; for $t$ very close to $\hat t$ and $t < \hat t$, either $S_t \cap M_+$ has a compressing disk in $H^0_t$, or not. 

In the first case, we see the following. 

\begin{clm}\label{clm2}
There are no compressing disks for $S_{t'} \cap M_+$ in $H^1_{t'}$. 
\end{clm}
\begin{proof}
If such a compressing disk exists in $H^1_{t'}$, then there exists a compressing disk in $S_t \cap M_+$ in $H^1_t$ for $t < {\hat t}$ by Claim~\ref{clm1}, contradicting the assumption that the first compressing disk for $S_{t'} \cap M_+$ in $H^1_{t'}$ appears for $t' > \hat t$. 
\end{proof}

Thus the side $M_-$ must be where the first compressing disk in $H^1_{t'}$ appears at the level $t'$ ($t < \hat{t} < t'$). 
That is, $S_{t'} \cap M_-$ has a compressing disk in $H^1_{t'}$. 
Then, there are no compressing disks for $S_{t'} \cap M_+$ in $H^0_{t'}$, for $S$ is strongly irreducible. 
Together with Claim~\ref{clm2}, we conclude that, for $t' > \hat t$ and $t'$ very close to $\hat t$, there are no compressing disks for $S_{t'} \cap M_+$ in $M_+$, i.e., $S_{t'} \cap M_+$ is incompressible in $M_+$.

Also, at the level $t$ ($t < \hat{t} < t'$), there cannot be any compressing disks in $S_t \cap M_-$ in $H^0_t$. Because if such a compressible disk exists, then it gives a compressing disk in $S_{t'} \cap M_-$ in $H^0_{t'}$ by Claim~\ref{clm1}, contradicting that $S_{t'} \cap M_-$ does not have compressing disks on both sides. 
Also, at the level $t$, there cannot be any compressing disks in $S_t \cap M_-$ in $H^1_t$, for the first compressing disk for $S_{t'} \cap M_-$ in $H^1_{t'}$ appears for $t' > \hat t$. 
We conclude that there cannot be any compressing disks for $S_t \cap M_-$ in $M_-$ at the level $t$, i.e., $S_t \cap M_-$ is incompressible in $M_-$. 
This gives the second case of the theorem. 

Finally the third case occurs when $S_t \cap M_+$ has no compressing disk in $H^0_t$ for $t$ very close to $\hat t$ and $t < \hat t$. 
In this case, there are no compressing disk for $S_t \cap M_+$ in $H^1_t$, for the first compressing disk for $S_{t'} \cap M_+$ in $H^1_{t'}$ appears for $t' > \hat t$. 
This implies that $S_t \cap M_+$ is incompressible in $M_+$. 
In the same way, $S_t \cap M_-$ has no compressing disk in $H^1_t$. 

\begin{clm}\label{clm3}
There are no compressing disks for $S_t \cap M_-$ in $H^0_t$. 
\end{clm}
\begin{proof}
If such a compressing disk exists for $S_t \cap M_-$ in $H^0_t$, then it extends to a compressing disk in $S_{t'} \cap M_-$ in $H^0_{t'}$ for $t' > {\hat t}$ by Claim~\ref{clm1}. 
This gives a contradiction to the strong irreducibility of the splitting (resp. the assumption that $S_{t'} \cap M_-$ does not have compressing disks on both sides) 
in the case that the first compressing disk at the level $t' > \hat t$ appears in the $M_+$ side (resp. the $M_-$ side). 
\end{proof}

Therefore we conclude that, for $t<\hat t$ and $t$ arbitrarily close to $\hat t$, there are no compressing disks for either $S_t \cap M_+$ or $S_t \cap M_-$.
\end{proof}

We note that if the first option in the theorem occurs, then the Hempel distance of the Heegaard splitting is at most 2. 
This immediately implies the following. 

\begin{cor}\label{cor1}
Suppose that $J$ is separating and incompressible and $S$ is a Heegaard splitting which has Hempel distance at least $3$. Then the second or third possibilities must occur. 
\end{cor}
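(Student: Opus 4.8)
The plan is to deduce Corollary~\ref{cor1} at once from the preceding theorem together with the remark immediately above it: the theorem guarantees that one of the three listed possibilities occurs, so under the hypothesis that the Hempel distance of $S$ is at least $3$ it suffices to exclude the first possibility. Hence the substantive point to verify is the assertion of the remark: if the first possibility occurs, then the Hempel distance of the Heegaard splitting is at most $2$.

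To see this, suppose the first possibility holds, so that at some non-critical level $t$ we have (say) $S_t \cap M_+$ incompressible in $M_+$ while $S_t \cap M_-$ admits compressing disks on both sides of $S_t$, namely a disk $D^0 \subset H^0_t$ and a disk $D^1 \subset H^1_t$ whose boundaries are essential in $S_t \cap M_-$. First I would record that $H^0_t$ and $H^1_t$ are the two handlebodies of the Heegaard splitting along the Heegaard surface $S_t$, which is isotopic in $M$ to the original surface $S$ (slide $t$ to $1/2$ along the foliation); consequently, under this identification, essential curves on $S_t$ bounding disks in $H^0_t$ are exactly those of $\mathcal C$, and those bounding disks in $H^1_t$ are exactly those of $\mathcal C'$. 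Next I would note that $J$ meets every level $S_t$, since an incompressible closed surface cannot be contained in a handlebody; in particular $S_t \cap M_+$ and $S_t \cap M_-$ are proper nonempty subsurfaces of the connected surface $S_t$, with no closed components, and their common frontier $J \cap S_t$ consists of curves essential in $S_t$ by Lemma~\ref{lem}(3). I would then check that $\partial D^0$ and $\partial D^1$ are essential in all of $S_t$, not merely in $S_t \cap M_-$: isotoping $\partial D^0$ into the interior of $S_t \cap M_-$, any disk $E \subset S_t$ bounded by it would contain no curve of $J \cap S_t$ in its interior (such curves are essential in $S_t$, hence bound no subdisk of $E$) and no closed component of $S_t \cap M_+$ (there are none), so $E \subset S_t \cap M_-$, contradicting essentiality of $\partial D^0$ there. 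Thus $\partial D^0 \in \mathcal C$ and $\partial D^1 \in \mathcal C'$.

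Finally, choosing any curve $c$ of $J \cap S_t$ and isotoping $\partial D^0$, $\partial D^1$ into the interior of $S_t \cap M_-$, both become disjoint from $c$, which is essential in $S_t$; so $(\partial D^0, c, \partial D^1)$ is a path of length $2$ in the curve complex of $S_t$ from $\mathcal C$ to $\mathcal C'$, whence the Hempel distance of the splitting is at most $2$. This contradicts the hypothesis, so the first possibility of the theorem cannot occur and the second or third must. I expect the only delicate step to be the passage from ``essential in $S_t \cap M_-$'' to ``essential in $S_t$'' for $\partial D^0$ and $\partial D^1$, which is precisely where Lemma~\ref{lem}(3) is used; the identification of $H^0_t, H^1_t$ with the handlebodies of the splitting and the curve-complex bookkeeping are routine.
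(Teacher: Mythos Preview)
Your proposal is correct and follows exactly the route the paper intends: the paper simply asserts (in the sentence just before the corollary) that the first option forces Hempel distance at most $2$ and then states Corollary~\ref{cor1} without further proof, whereas you supply the omitted verification of that assertion. The argument you give---using Lemma~\ref{lem}(3) to promote $\partial D^0,\partial D^1$ to curves essential in $S_t$, and then taking any curve of $J\cap S_t$ as the middle vertex of a length-$2$ path---is precisely the standard justification behind the paper's remark.
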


The next corollary gives a proof of the second assertion of Theorem~\ref{MainThm}. 

\begin{cor}\label{cor2}
Suppose that $J$ is separating and incompressible and $S$ is a Heegaard splitting which has Hempel distance at least $4$. Then only the third possibility can occur. 
\end{cor}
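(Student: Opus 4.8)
The plan is to argue by contradiction, showing that if either of the first two possibilities of the theorem held then the Heegaard splitting would have Hempel distance at most $3$. The first possibility is already excluded by Corollary~\ref{cor1} (distance at least $4$ is in particular at least $3$), so the work is to exclude the second possibility. Suppose it occurs, say with the labelling as stated: there is a critical level $\hat t$ such that, for $t<\hat t$ close to $\hat t$, the surface $S_t\cap M_+$ is incompressible in $M_+$ while $S_t\cap M_-$ has a compressing disk $E$ lying below $S_t$ (that is, in $H^0_t$), and for $t'>\hat t$ close to $\hat t$, the surface $S_{t'}\cap M_-$ is incompressible in $M_-$ while $S_{t'}\cap M_+$ has a compressing disk $F$ lying above $S_{t'}$ (in $H^1_{t'}$). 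Fix such $t<\hat t<t'$, chosen close enough that $\hat t$ is the only critical level of $h|_J$ in $[t,t']$. The goal is to produce a path of length $3$ in the curve complex of $S$ running from a curve bounding a disk in one handlebody of the splitting to a curve bounding a disk in the other.

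First I would record the relevant curves. The boundary $\partial E$ lies in the interior of $S_t\cap M_-$, hence is disjoint from every component of $J\cap S_t=\partial(S_t\cap M_-)$; it is essential in $S_t$, for if it bounded a disk in $S_t$ that disk would not be contained in $S_t\cap M_-$ and so would contain a component of $J\cap S_t$, contradicting part (3) of Lemma~\ref{lem} together with the irreducibility of $M$; and it bounds the disk $E$ in the handlebody $H^0_t$ below $S_t$. Symmetrically $\partial F$ is essential in $S_{t'}$, disjoint from every component of $J\cap S_{t'}$, and bounds the disk $F$ in the handlebody $H^1_{t'}$ above $S_{t'}$. One also notes that $J\cap S_t$ and $J\cap S_{t'}$ are non-empty: an incompressible surface disjoint from a level surface would be a closed incompressible surface inside one of the two handlebodies, which is impossible.

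Next I would relate $J\cap S_t$ to $J\cap S_{t'}$. Since the Heegaard function $h$ has no critical levels in $(0,1)$, the region $h^{-1}([t,t'])$ is a product $S\times[t,t']$, and as $\hat t$ is the only critical level of $h|_J$ there, crossing it changes $J\cap S_t$ into $J\cap S_{t'}$ by a single band move, along a band meeting $J\cap S_t$ only in two subarcs of its boundary (the band‑sum phenomenon recalled after Lemma~\ref{lem}; see the Appendix to \cite{Haken}). Depending on whether the saddle merges two components of $J\cap S_t$ into one or splits one component into two, I would take $c$ to be a component of $J\cap S_t$ and $c'$ a component of $J\cap S_{t'}$ that cobound, with one further curve, a pair of pants embedded in $S$ — namely a summand together with the band sum, or a surgered curve together with one of its two descendants. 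Then $c$ and $c'$ are disjoint on $S$, and by part (3) of Lemma~\ref{lem} both are essential. Transporting $\partial E$ and $c$ up to the level $t'$ through the product region $h^{-1}([t,t'])$ preserves their disjointness and, for $\partial E$, the property of bounding a disk below the level surface. Hence $\partial E,\,c,\,c',\,\partial F$ is a path of length $3$ in the curve complex of $S$ from the disk set of one handlebody to that of the other, so the Hempel distance is at most $3$, contradicting the hypothesis. Therefore neither the first nor the second possibility can occur, leaving only the third.

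The main obstacle, I expect, is the middle step: keeping careful track of exactly which components of $J\cap S_t$ are affected when the level surface crosses the saddle, handling the merge and split cases uniformly, and — crucially — invoking part (3) of Lemma~\ref{lem} to guarantee that the curves $c$ and $c'$ produced by the band move remain essential, since an inessential choice would break the path. The bookkeeping of disk‑bounding and of disjointness under the product identification $h^{-1}([t,t'])\cong S\times[t,t']$ is routine but should be carried out explicitly.
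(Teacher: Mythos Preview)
Your proof is correct and follows essentially the same approach as the paper: rule out the first case via Corollary~\ref{cor1}, and in the second case build a length-$3$ path $\partial E,\,c,\,c',\,\partial F$ in the curve complex, where $c$ and $c'$ are curves of $J\cap S_t$ and $J\cap S_{t'}$ made disjoint by the single band move at $\hat t$. The paper's proof is terser---it speaks of the whole families $S_t\cap J$ and $S_{t'}\cap J$ being pushed off one another rather than selecting specific components---but the content is the same; your added verifications (essentiality of $\partial E$ in $S_t$, non-emptiness of $J\cap S_t$) are not in the paper but are welcome.
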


\begin{proof}
The first possibility in the theorem contradicts Hempel distance at least $3$ by Corollary~\ref{cor1}

Recall that the second case occurs when a single band sum of $S_t$ across $J$ at the critical level $\hat t$ produces a compressing disk $D_0$ in $H^0_t$ for $S_t$ and $t<\hat t$, whereas there is a compressing disk $D_1$ for $S_{t^\prime}$ in $H^1_{t'}$ for $t^\prime >\hat t$. 
There are either one or two curves of $S_t \cap J$ involved with the band sum. 
After the band sum, we get a new family of curves which can be pushed off the old family. 
But then we see that there is a compressing disk $D_0$ for $S_t$ in $H^0_t$ disjoint from $S_t \cap J$ for $t<\hat t$ and similarly a compressing disk $D_1$ for $S_{t^\prime}$ in $H^1_{t'}$ for $t^\prime >\hat t$. 
We conclude that $\partial D_0$ is disjoint from $S_t \cap J$ which can be made disjoint from $S_{t^\prime} \cap J$ which is disjoint from $\partial D_1$. 
This contradicts the Hempel distance of $S$ being at least $4$. 
\end{proof}

Also the following is deduced from our theorems. 

\begin{thm}
Suppose that a closed orientable 3-manifold $M$ has a strongly irreducible Heegaard splitting $S$ of Hempel distance at least 3 and a separating incompressible surface $J$. 
Then $M$ has a very short hierarchy consisting of $J$ and a collection of incompressible and boundary incompressible surfaces $\Sigma_+$ and $\Sigma_-$ properly embedded in $M_+$ and $M_-$ respectively, the two components of $M$ cut open along $J$. 
So when $M$ is cut open along $J, \Sigma_+, \Sigma_-$, the result is a collection of handlebodies. 
The sum of the Euler characteristics of the surfaces in $\Sigma_+, \Sigma_-$ is greater than or equal to the Euler characteristic of $S$ minus 1. 
In addition, if the Hempel distance of $S$ is at least 4, then the sum of the Euler characteristics of such surfaces in $\Sigma_+, \Sigma_-$ is greater than or equal to the Euler characteristic of $S$. 
\end{thm}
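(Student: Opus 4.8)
The plan is to read off $\Sigma_+$ and $\Sigma_-$ from the three alternatives of the earlier theorem, clean them into incompressible and boundary incompressible surfaces, and then verify the handlebody conclusion and the Euler characteristic estimate. Since the Hempel distance of $S$ is at least $3$, Corollary~\ref{cor1} excludes the first alternative, so either the second or the third holds. In the third alternative there is a non-critical level $t$ with $S_t \cap M_+$ incompressible in $M_+$ and $S_t \cap M_-$ incompressible in $M_-$; set $\Sigma_+^0 = S_t \cap M_+$ and $\Sigma_-^0 = S_t \cap M_-$. In the second alternative there are non-critical levels $t < \hat t < t'$ with $S_t \cap M_+$ incompressible in $M_+$ and $S_{t'} \cap M_-$ incompressible in $M_-$; set $\Sigma_+^0 = S_t \cap M_+$ and $\Sigma_-^0 = S_{t'} \cap M_-$. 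These two surfaces sit at different levels, which is harmless since they lie in the different pieces $M_+$ and $M_-$. Since $M$ is irreducible and $J$ is incompressible, $M_+$ and $M_-$ are irreducible and $\partial$-irreducible (their boundary $J$ is incompressible in them), and by Lemma~\ref{lem}(3) neither $\Sigma_+^0$ nor $\Sigma_-^0$ has a disk or sphere component.

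Next I would make these surfaces essential: maximally $\partial$-compress $\Sigma_+^0$ in $M_+$ and $\Sigma_-^0$ in $M_-$. Since $M_\pm$ is irreducible this preserves incompressibility, and since $M_\pm$ is also $\partial$-irreducible the process terminates at surfaces $\Sigma_+$, $\Sigma_-$ each of whose components is incompressible and $\partial$-incompressible (any disk component that appears is $\partial$-parallel, hence still incompressible and $\partial$-incompressible, and is retained). I perform only $\partial$-compressions and discard nothing; since each $\partial$-compression raises the Euler characteristic by $1$, we get $\chi(\Sigma_+) + \chi(\Sigma_-) \ge \chi(\Sigma_+^0) + \chi(\Sigma_-^0)$.

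For the Euler characteristic estimate, recall that cutting a surface along a system of simple closed curves does not change its Euler characteristic, so $\chi(S_t \cap M_+) + \chi(S_t \cap M_-) = \chi(S_t) = \chi(S)$. In the third alternative this gives $\chi(\Sigma_+^0) + \chi(\Sigma_-^0) = \chi(S)$. In the second alternative the single saddle at the critical level $\hat t$ transfers one band from the $M_+$ side to the $M_-$ side, so $\chi(S_{t'} \cap M_-) = \chi(S_t \cap M_-) - 1$ and hence $\chi(\Sigma_+^0) + \chi(\Sigma_-^0) = \chi(S) - 1$. Combining with the previous paragraph, $\chi(\Sigma_+) + \chi(\Sigma_-) \ge \chi(S) - 1$ when the Hempel distance is at least $3$; and when it is at least $4$, Corollary~\ref{cor2} forces the third alternative, so $\chi(\Sigma_+) + \chi(\Sigma_-) \ge \chi(S)$.

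The remaining and most delicate point is to show that $M$ cut open along $J$, $\Sigma_+$, $\Sigma_-$ is a disjoint union of handlebodies. Since $\Sigma_+^0$ and $\Sigma_-^0$ are subsurfaces of Heegaard level surfaces, cutting $M$ along $J$ together with the relevant level surfaces produces pieces of the form $M_\pm \cap H_\tau^0$ and $M_\pm \cap H_\tau^1$, each of which is a union of components of the handlebody $H_\tau^i$ cut along the properly embedded surface $J \cap H_\tau^i$. I would argue that, after isotoping $J$ into the minimal position from the proof of Lemma~\ref{lem} — whose minimality rules out the relevant innermost compressions and band-type $\partial$-compressions — possibly followed by further $\partial$-compressions, each $J \cap H_\tau^i$ is incompressible and $\partial$-incompressible in $H_\tau^i$; such a surface in a handlebody is a disjoint union of disks, so cutting $H_\tau^i$ along it yields handlebodies. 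Finally I would check that each $\partial$-compression used to pass from $\Sigma_\pm^0$ to $\Sigma_\pm$ can be taken, after the usual innermost-disk and outermost-arc simplifications against $S_\tau$, to lie in a single complementary piece, so that it replaces one handlebody by handlebodies and attaches a $1$-handle to another, preserving the handlebody property of all the pieces. I expect the principal obstacle to be exactly this circle of issues: arranging that $J \cap H_\tau^0$ and $J \cap H_\tau^1$ are simultaneously essential in their handlebodies while also simplifying $\Sigma_\pm^0$ to $\Sigma_\pm$, without either process undoing the other. By comparison, the Euler characteristic bookkeeping is routine.
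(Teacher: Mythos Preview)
Your overall strategy matches the paper's: invoke Corollaries~\ref{cor1} and~\ref{cor2} to reduce to the second or third alternative, take $\Sigma_\pm^0$ to be the incompressible level pieces, $\partial$-compress to obtain $\Sigma_\pm$, and do the Euler characteristic bookkeeping. Your Euler characteristic computation (including the $-1$ from the single band in the second alternative) is correct and agrees with the paper.

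The gap is in your handlebody argument. You expect the minimal Morse position of Lemma~\ref{lem} to force $J \cap H_\tau^i$ to be $\partial$-incompressible in $H_\tau^i$, hence a union of disks. This is false: every saddle of $h|_J$ below level $\tau$ is precisely a $\partial$-compressing disk for $J \cap H_\tau^0$ in $H_\tau^0$, so $J \cap H_\tau^0$ is almost never $\partial$-incompressible. What \emph{does} hold, and what the paper uses, is that $J \cap H_\tau^i$ is \emph{incompressible} in $H_\tau^i$. Condition~(2) of Lemma~\ref{lem} (only saddles) forces every curve of $J \cap S_\tau$ to be essential in $J$ as well, since otherwise $h$ restricted to the disk it bounds in $J$ would have an interior extremum; hence a putative compressing disk for $J \cap H_\tau^i$ would produce a disk in $J$ containing such essential curves, a contradiction. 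The paper then invokes the standard fact that an incompressible properly embedded surface in a handlebody cuts it into handlebodies. With this in hand, the step you flag as delicate becomes routine: each $\partial$-compression of $\Sigma_\pm^0$ in $M_\pm$ cuts one complementary handlebody along a properly embedded disk and attaches a $1$-handle to the neighbouring piece, both operations preserving the handlebody property. There is no simultaneity problem of the sort you anticipate; once incompressibility of $J \cap H_\tau^i$ gives handlebody complements from the outset, the $\partial$-compressions only simplify $\Sigma_\pm$. (The paper also discards $\partial$-parallel components rather than retaining them; this is cosmetic for the inequality.)
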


\begin{proof}
If the Hempel distance of $S$ is at least 3, by Corollary~\ref{cor1}, only the second or third possibilities in Theorem~\ref{MainThm} can occur. 
In addition, if it is at least 4, by Corollary~\ref{cor2}, only the third possibility can occur. 

When the second possibility occurs, there is a critical level $\hat t$ so that $S_{t} \cap M_+$ is incompressible for $t<{\hat t}$ and $t$ close to ${\hat t}$, and $S_{t'} \cap M_-$  is incompressible for $t' > {\hat t}$ and $t'$ close to ${\hat t}$, or the same with $M_+$, $M_-$ interchanged. 
Then $S_{t} \cap M_+$ and $S_{t'} \cap M_-$ split $M_+$, $M_-$ respectively into handlebodies, since $S_t$, $S_{t'}$ bound handlebodies $H^0_t$, $H^1_{t'}$ in $M$, and families of incompressible surfaces $J \cap H^0_t$, $J \cap H^1_{t'}$ split handlebodies into handlebodies. 
To form a very short hierarchy, it suffices to perform boundary compressions of the components of $S_{t} \cap M_+$ and $S_{t'} \cap M_-$ in $M_+$ and $M_-$ respectively. 
Notice these boundary compressions may remove some of the handles of the handlebodies. 
The result is another family of handlebodies, after we cut $M_+$ and $M_-$ respectively open along $\Sigma_+$ and $\Sigma_-$ which are the families of incompressible and boundary incompressible surfaces formed by the boundary compressions. 
Any component of $S_t \cap M_+$ and $S_{t'} \cap M_-$ which is boundary parallel in $M_+$ or $M_-$ respectively is discarded in this process. 
Note that 
there could be an extra band on $S$ between the cut open surfaces $S_t \cap M_+$ and $S_{t'} \cap M_-$. 
It then follows that $\chi(S) \le \chi(\Sigma_+) + \chi (\Sigma_-) +1$. 

When the third possibility occurs, there is a non-critical value of $t$ so that $S_t \cap M_+$ and $S_t \cap M_-$ are both incompressible. 
The same argument as above can be applied also is this case. 
Moreover, in this case, since the union of $S_t \cap M_+$ and $S_t \cap M_-$ is homeomorphic to $S$, $\chi(S) \le \chi(\Sigma_+) + \chi (\Sigma_-)$ holds. 
\end{proof}

\bigskip

\noindent{\bf Acknowledgements.}

The authors would like to thank Toshio Saito for useful conversations.

\end{document}